\newtheorem{theorem}{Theorem}[section]
\newtheorem{lem}[theorem]{Lemma}
\newtheorem{prop}[theorem]{Proposition}
\newtheorem{question}[theorem]{Question}
\theoremstyle{definition}
\theoremstyle{remark}
\numberwithin{equation}{section}
\newcommand{\ra}{\rightarrow}
\newcommand{\Soul}{\Sigma}
\newcommand{\lb}{\langle}
\newcommand{\rb}{\rangle}
\newcommand{\R}{\mathbb{R}}
\newcommand{\og}{\overline{\gamma}}
\newcommand{\ox}{\overline{X}}
\newcommand{\ov}{\overline{V}}
\newcommand{\ou}{\overline{U}}
\begin{document}

\title{Rigidity for Odd-dimensional Souls}
\author{Kristopher Tapp}
\address{Department of Mathematics\\ Saint Joseph's University\\ 5600 City Avenue
         Philadelphia, PA 19131}
\email{ktapp@sju.edu}

\subjclass{Primary 53C20}
\date{\today}
\keywords{nonnegative curvature, soul, flats}

\begin{abstract}
We prove a new rigidity result for an open manifold $M$ with nonnegative sectional curvature whose soul $\Sigma\subset M$ is odd-dimensional.  Specifically, there exists a geodesic in $\Sigma$ and a parallel vertical plane field along it with constant vertical curvature and vanishing normal curvature.  Under the added assumption that the Sharafutdinov fibers are rotationally symmetric, this implies that for small $r$, the distance sphere $B_r(\Soul)=\{p\in M\mid\text{dist}(p,\Soul)=r\}$ contains an immersed flat cylinder, and thus could not have positive curvature.
\end{abstract}

\maketitle
\date{\today}

\section{Introduction}
In this paper, we prove the following rigidity result for odd-dimensional souls:
\begin{theorem}\label{A}
 If $M$ is an open manifold with nonnegative sectional curvature whose soul $\Sigma\subset M$ is odd-dimensional, then there exists a geodesic $\gamma(t)$ in $\Sigma$, and orthonormal parallel vertical vector fields $V(t),W(t)$ along $\gamma(t)$ (``vertical'' means orthogonal to $T_{\gamma(t)}\Sigma$ for each $t$) such that $\lb R(V(t),W(t))W(t),V(t)\rb$ is constant and $R(V(t),W(t))\gamma'(t)=0$ for all $t$, where $R$ denotes the curvature tensor of $M$.
\end{theorem}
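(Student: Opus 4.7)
The plan is to combine a minimization on a compact frame bundle over $\Sigma$ with the odd-dimensional hypothesis to produce the required geodesic and parallel vertical frame.

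First, let $P$ be the compact Stiefel bundle over $\Sigma$ whose fiber at $x$ consists of orthonormal 2-frames $(V,W)$ in $\nu_x\Sigma$, equipped with the connection induced by the normal connection on $\nu\Sigma$. Define $f\colon P\to\R$ by $f(x,V,W)=\lb R(V,W)W,V\rb$ and let $m=\min f$, attained at $(x_0,V_0,W_0)$. Varying $(V,W)$ in the fiber gives $\lb R(V_0,W_0)V_0,Z\rb=\lb R(V_0,W_0)W_0,Z\rb=0$ for every $Z\in\nu_{x_0}\Sigma$ orthogonal to $V_0,W_0$. Varying the base point while parallel-transporting the frame gives the horizontal critical condition $(\nabla_u R)(V_0,W_0,W_0,V_0)=0$ for every $u\in T_{x_0}\Sigma$, together with a nonnegative second variation.

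Next I would use the odd dimension of $\Sigma$. The endomorphism $A=\mathrm{pr}_{T\Sigma}\circ R(V_0,W_0)|_{T_{x_0}\Sigma}$ is skew-symmetric on $T_{x_0}\Sigma$: for $u,v\in T_{x_0}\Sigma$, $\lb Au,v\rb=\lb R(V_0,W_0)u,v\rb=-\lb u,R(V_0,W_0)v\rb=-\lb u,Av\rb$. Since $\dim\Sigma$ is odd, $A$ has nontrivial kernel; choose $u_0\in\ker A\setminus\{0\}$, so that $R(V_0,W_0)u_0\in\nu_{x_0}\Sigma$. To strengthen this to $R(V_0,W_0)u_0=0$, I would test against each vertical direction: the pair symmetry $\lb R(V_0,W_0)u_0,Z\rb=\lb R(u_0,Z)V_0,W_0\rb$, combined with the soul identity $R(X,V)V=0$ for $X\in T\Sigma$, $V\in\nu\Sigma$ at points of $\Sigma$ (a consequence of the flat-strip theorem and of $\Sigma$ being totally geodesic), its polarization $R(X,V)W=-R(X,W)V$, and the first Bianchi identity, allow one to conclude that $\lb R(V_0,W_0)u_0,V_0\rb$, $\lb R(V_0,W_0)u_0,W_0\rb$, and $\lb R(V_0,W_0)u_0,Z\rb$ all vanish.

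With $R(V_0,W_0)u_0=0$ in hand, set $\gamma(t)=\exp_{x_0}(tu_0)$ and let $V(t),W(t)$ be the parallel transports of $V_0,W_0$ along $\gamma$. For the constancy of $\lb R(V,W)W,V\rb$, the global minimality of $(x_0,V_0,W_0)$ combined with the first-order horizontal criticality in all directions should force the orbit $(\gamma(t),V(t),W(t))$ to remain in the level set $\Lambda=f^{-1}(m)\subset P$ in the direction $u_0$, so that each point of the orbit is again a minimizer; the pointwise reasoning from the previous step then yields $R(V(t),W(t))\gamma'(t)=0$ for every $t$.

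The main obstacle is this propagation step. The minimization immediately produces the structural pointwise statement at the single point $x_0$, but extending constancy of $f$ and the vanishing of $R(V,W)\gamma'$ to all of $\gamma$ requires showing that the kernel direction $u_0$ is tangent to $\Lambda$. This should follow from the second-order horizontal Hessian degenerating along $u_0$, possibly combined with a careful analysis of the ODE $\nabla_{\gamma'}\bigl(R(V,W)\gamma'\bigr)=(\nabla_{\gamma'}R)(V,W)\gamma'$ driven by the second Bianchi identity, or an equivalent compactness/recurrence argument on $P$. The odd-dimensional hypothesis is the essential ingredient, as it is what makes $A$ degenerate and thereby supplies the starting direction $u_0$.
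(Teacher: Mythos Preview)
Your pointwise setup is essentially the same as the paper's: extremize the vertical curvature $\langle R(V,W)W,V\rangle$ over the compact frame bundle, then use odd-dimensionality of $T_p\Sigma$ to find a unit $X=u_0$ in the kernel of the skew endomorphism $R(V_0,W_0)|_{T_p\Sigma}$. Your argument that $R(V_0,W_0)u_0$ has no vertical component (via the flat-strip identity $R(X,V)V=0$, polarization, and Bianchi) is correct and is also what underlies the paper's identification of $R^\nabla$ with the restriction of $R$. One small mismatch: the paper \emph{maximizes} rather than minimizes, because the sign of the second-variation inequality has to match what the propagation machinery needs.

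The genuine gap is exactly where you locate it: propagation along $\gamma$. None of your three suggested mechanisms works on its own. Criticality plus a one-sided Hessian bound at $t=0$ cannot prevent $t\mapsto\langle R(V(t),W(t))W(t),V(t)\rangle$ from moving off the extremal value; second Bianchi gives relations among first covariant derivatives of $R$ but no second-order coercivity; and there is no recurrence argument available on a non-compact $\Sigma$-geodesic. The paper closes this gap with a specific, previously established \emph{soul inequality} (Proposition~\ref{soulin}):
\[
(D_XR)(X,Y,W,V)^2 \le \Bigl(|R(W,V)X|^2 + \tfrac{2}{3}(D_XD_XR)(W,V,V,W)\Bigr)\cdot R(X,Y,Y,X),
\]
valid at every point of $\Sigma$. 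Along $\gamma$ this becomes a differential inequality $f'(t)^2\le (f(t)^2+g''(t))\cdot C$ for $f(t)=|R^\nabla(W(t),V(t))\gamma'(t)|$ and $g(t)=\tfrac{2}{3}\langle R(V,W)W,V\rangle(t)$, with $f(0)=0$ and $g$ having a global maximum at $0$. A separate analytic lemma (Lemma~\ref{ilya}) then forces $f\equiv 0$ and $g\equiv\text{const}$. So the missing idea is not a sharpening of your variational picture but an external curvature inequality, specific to the soul setting, that couples the derivative of the normal curvature to the second derivative of the vertical curvature; without it the propagation step does not go through.
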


We will discuss why this rigidity provides infinitesimal evidence of an affirmative answer to:
\begin{question}\label{Q} For each small $r>0$, must the image, $C_r$, of the immersed cylinder $$\sigma(t,\theta)=\{\exp_{\gamma(t)}\left( (r\cos\theta)V(t)+(r\sin\theta)W(t)\right)\mid t\in\R,\theta\in S^1\}$$ be ``flat'' in the sense that the Jacobi field $\frac{\partial\sigma}{\partial\theta}$ along each of the $\{\theta=\text{constant}\}$ geodesics out of which $C_r$ is ruled is a parallel Jacobi field in $M$?
\end{question}

Notice that $C_r$ is not totally geodesic in $M$, but might be totally geodesic in the distance sphere $B_r(\Soul)=\{p\in M\mid\text{dist}(p,\Soul)=r\}$.  Recall that $B_r(\Soul)$ is convex in $M$ by~\cite{GW} and thus always inherits nonnegative curvature.  An affirmative answer to Question~\ref{Q} means that $B_r(\Soul)$ could \emph{not} have strictly positive curvature when $\Soul$ is odd-dimensional.  However, examples are known for which $B_r(\Soul)$ has \emph{some} points of positive curvature.  Specifically,  Wilking constructed in~\cite{W} a metric with almost positive curvature on $S^2\times S^3$ that can be extended to a nonnegatively curved metric on $S^3\times \R^3$ (see~\cite{T3} for an alternative description of his metric that makes the extendability more obvious).

We provide an affirmative answer to Question~\ref{Q} under the added hypothesis that the intrisic metric on each Sharafutdinov fiber is rotationally symmetric (which means $O(k)$-invariant, where $k$ is the dimension of the fiber).  We do \emph{not} need to assume that the fibers all have the same rotationally symmetric metric.

\begin{theorem}\label{B}
With the assumptions and terminology of Theorem~\ref{A} and Question~\ref{Q}, if the Sharafutdinov fibers are all rotationally symmetric, then for each $r>0$, $C_r$ is flat; further, the fibers along $\gamma$ are mutually isometric.
\end{theorem}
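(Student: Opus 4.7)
The plan is to realize $C_r$ as a family of horizontal lifts of $\gamma$ for the Sharafutdinov submersion, and then to exploit rotational fiber symmetry together with the curvature identities of Theorem~\ref{A} to upgrade this to flatness of $C_r$ and mutual isometry of the fibers. Let $\pi\colon U \to \Sigma$ denote the Sharafutdinov retraction on a tubular neighborhood $U$ of $\Sigma$. By Perelman's results on souls, $\pi$ is a Riemannian submersion, horizontal lifts of geodesics in $\Sigma$ are $M$-geodesics, and the holonomy diffeomorphism $\phi_t\colon F_0 \to F_t$, with $F_s := \pi^{-1}(\gamma(s))$, is an isometry of intrinsic fiber metrics that fixes the soul point $\gamma(s) \in F_s$.

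For each $\theta_0 \in S^1$, set $q(\theta_0) := \exp_{\gamma(0)}\bigl(r\cos\theta_0\, V(0) + r\sin\theta_0\, W(0)\bigr) \in F_0$, and define $\tilde\gamma_{\theta_0}(t) := \phi_t(q(\theta_0))$, an $M$-geodesic by Perelman. The main step will be to identify $\tilde\gamma_{\theta_0}(t)$ with $\sigma(t, \theta_0)$, possibly after reparametrizing $\theta$. Granted this identification, the $\theta$-constant geodesics ruling $C_r$ are precisely the horizontal lifts $\tilde\gamma_{\theta_0}$; the intrinsic fiber isometries $\phi_t$ immediately yield that the fibers $F_t$ along $\gamma$ are mutually isometric; and the Jacobi field $J(t) := \partial_\theta\sigma(t,\theta_0) = d\phi_t(T)$ with $T := \partial_\theta q|_{\theta=\theta_0}$ has constant length along $\tilde\gamma_{\theta_0}$, since $\phi_t$ is an isometry. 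Combined with the constancy of $\langle R(V,W)W,V\rangle$, the condition $R(V,W)\gamma'=0$, and the extra curvature isotropy supplied by rotational symmetry of the fibers at the soul points, the Jacobi equation for $J$ along $\tilde\gamma_{\theta_0}$ should force $\nabla_{\dot{\tilde\gamma}_{\theta_0}}J = 0$, giving the parallel Jacobi condition of Question~\ref{Q}.

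The key step, and the main obstacle, is the identification $\tilde\gamma_{\theta_0}(t) = \sigma(t,\theta_0)$. The differential $d\phi_t|_{\gamma(0)}\colon T_{\gamma(0)}F_0 \to T_{\gamma(t)}F_t$ is an orthogonal map, and we must show it carries $\text{span}(V(0),W(0))$ onto $\text{span}(V(t),W(t))$. The condition $R(V,W)\gamma'=0$ from Theorem~\ref{A} can be read, via O'Neill's $A$-tensor of $\pi$, as saying that the infinitesimal rotation in the $(V,W)$-plane is preserved by the submersion holonomy along $\gamma$, so that $d\phi_t$ preserves the 2-plane. Rotational symmetry of the fibers then contributes in two ways: first, the $M$-exponential circles $\{q(\theta)\}$ and $\{\sigma(t,\theta)\}$ are intrinsic metric circles in $F_0$ and $F_t$ (because radial $M$-geodesics out of a soul point in vertical directions stay in the fiber and minimize intrinsic fiber distance by the $O(k)$-isotropy), so the intrinsic isometry $\phi_t$ maps one circle to the other; second, since $\phi_t$ fixes $\gamma(t)$ and acts as a rotation on the $(V,W)$-plane, it matches $\theta \mapsto \theta + \alpha(t)$ for a reparametrization angle $\alpha(t)$.

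The hard part will be coupling the intrinsic fiber geometry to the extrinsic parallel transport of $V, W$ in $M$. Theorem~\ref{A} produces $V, W$ as $M$-parallel, whereas $\phi_t$ is defined as an intrinsic fiber isometry; to conclude that $d\phi_t$ sends $V(0), W(0)$ into $\text{span}(V(t),W(t))$, the two types of transport must cooperate on the $(V,W)$-plane. I expect this to follow by showing that the $O(k)$-symmetry of each fiber extends to a local $M$-isometry preserving the horizontal distribution in a neighborhood of $\gamma$: uniqueness of horizontal lifts through $O(k)$-related starting points will then force $d\phi_t$ to coincide, on the $(V, W)$-plane, with the fiber rotation induced by $M$-parallel transport of the frame $\{V, W\}$, sealing the identification and both conclusions of the theorem.
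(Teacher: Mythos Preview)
Your proposal contains a genuine gap at its very first step: the claim that the holonomy diffeomorphisms $\phi_t\colon F_0\to F_t$ of the Sharafutdinov submersion are \emph{isometries} of the intrinsic fiber metrics is not a theorem of Perelman, and in fact is false for general Riemannian submersions. Perelman proved that $\pi$ is a Riemannian submersion, and it is true that $\phi_t$ fixes the soul point and preserves the distance spheres about it, but there is no reason a priori for $\phi_t$ to preserve lengths within those spheres. Indeed, that the fibers along $\gamma$ are mutually isometric is one of the \emph{conclusions} of the theorem, so you cannot invoke it as input. Once this assertion is removed, your argument collapses: you use it to get $|J(t)|$ constant, which is essentially the whole content of flatness.

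For comparison, the paper does not assume any isometry of holonomy. It decomposes the covariant derivative of the Jacobi field $\ov$ along a ruling geodesic as $\ov' = A_{\ox}\ov + T_{\ov}\ox$. The $A$-term vanishes because of $R^\nabla(W,V)X=0$ from Theorem~\ref{A} together with rotational symmetry (which makes $\exp^\perp_*$ orthogonality-preserving). For the $T$-term, rotational symmetry implies only that holonomy maps are \emph{rescalings} between round distance spheres (hence orthogonality-preserving, not isometric); this forces $\ov'$ to be parallel to $\ov$, so $\ov = s(t)P(t)$ with $P$ parallel. Then the Jacobi equation and nonnegative curvature give $s''\le 0$, and since $s>0$ everywhere, $s$ is constant. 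Only now does one conclude that holonomy along $\gamma$ acts isometrically on distance spheres at every radius, yielding mutual isometry of the fibers. Note also that the constancy of $\lb R(V,W)W,V\rb$ is never used here, and the identification of $\sigma(\cdot,\theta_0)$ with the horizontal lift through $q(\theta_0)$ is immediate from the fact that $t\mapsto\exp^\perp$ of a parallel normal field is a horizontal lift, not the ``main obstacle'' you describe.
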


We are pleased to thank Ilya Bogdanov for the proof of Lemma~\ref{ilya}.  We would also like to thank Igor Belegradek and Luis Guijarro for helpful discussions about this work.

\section{Background}
For the remainder of this paper, $M$ will denote an open manifold with nonnegative curvature.  According to~\cite{CG}, $M$ is diffeomorphic to the total space of the normal bundle of its soul, $\Sigma\subset M$.  We will denote this normal bundle as $\nu(\Soul)$, and its fiber at $p\in\Soul$ as $\nu_p(\Soul) = \{V\in T_p M\mid V\perp T_p\Soul\}$.  Our main tool is the following ``soul inequality'' for the curvature tensor, $R$, of $M$, found in~\cite{T}:

\begin{prop}[\cite{T}] \label{soulin}For all $p\in\Soul$, $X,Y\in T_p\Soul$ and $V,W\in\nu_p(\Soul)$, we have:
$$(D_XR)(X,Y,W,V)^2 \leq \left( |R(W,V,X)|^2 + \frac 23 (D_XD_XR)(W,V,V,W)\right)\cdot R(X,Y,Y,X).$$
\end{prop}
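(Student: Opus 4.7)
The plan is to recognize the stated inequality as a discriminant condition. Setting $a=R(X,Y,Y,X)$, $b=(D_XR)(X,Y,W,V)$, and $c=|R(W,V,X)|^2+\tfrac{2}{3}(D_XD_XR)(W,V,V,W)$, the bound $b^2\le ac$ is exactly the assertion that the quadratic $\varphi(t)=at^2+2bt+c$ is nonnegative for every $t\in\R$. My goal is therefore to exhibit $\varphi(t)$, up to a positive scalar, as the leading nontrivial term in a Taylor expansion of an ambient sectional curvature that is nonnegative by hypothesis.

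To construct the appropriate family of 2-planes I would use the Sharafutdinov/Perelman rigidity: near $\Soul$ the Sharafutdinov retraction $\pi\colon M\to\Soul$ is a Riemannian submersion with totally geodesic fibers, and parallel transport along a soul geodesic preserves verticality. Let $c(s)=\exp_p(sX)$ be the soul geodesic in the $X$-direction, parallel-transport $Y,V,W$ along $c$ to obtain fields $Y(s),V(s),W(s)$ (with $V(s),W(s)$ remaining vertical by Perelman), and form base points $q(s,u)=\exp_{c(s)}(uV(s))$. At each $q(s,u)$ I would parallel-transport $c'(s)$ and $Y(s)+tW(s)$ along the vertical geodesic $u\mapsto q(s,u)$ and consider the 2-plane $\Pi(s,u,t)$ they span; the sectional curvature $K(s,u,t)$ of $\Pi(s,u,t)$ is nonnegative throughout. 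Taylor-expanding $K$ in $u$ about $u=0$, the $u^0$ coefficient should reproduce $at^2$ together with a vertical-curvature contribution that, after trading $D_V$-derivatives for $D_X$-derivatives via the second Bianchi identity and the parallel transport rigidity, gets identified with $\tfrac{2}{3}(D_XD_XR)(W,V,V,W)$; the $u^1$ coefficient should supply $2bt$; and the $u^2$ coefficient, after noting that the O'Neill $T$-tensor vanishes (totally geodesic fibers) and applying the submersion O'Neill formula, should contribute precisely $|R(W,V,X)|^2$. The nonnegativity $K\ge 0$ then forces $\varphi(t)\ge 0$, and the discriminant condition yields the claimed inequality.

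The main obstacle will be the algebraic bookkeeping required to produce the coefficient $\tfrac{2}{3}$ and to place $|R(W,V,X)|^2$ with exactly the right sign. The second-variation expansion naturally produces several cross-terms of the form $R\cdot DR$ and $DR\cdot DR$, and collapsing them into the clean expression $|R(W,V,X)|^2+\tfrac{2}{3}(D_XD_XR)(W,V,V,W)$ requires careful use of the second Bianchi identity together with Perelman's parallel-transport rigidity, which identifies certain $D_V$-derivatives of $R$ with $D_X$-derivatives modulo explicit curvature-squared corrections. The $\tfrac{2}{3}$ factor is the fingerprint of a $u^3/6$ third-order Taylor contribution renormalized by the $u^2/2$ quadratic term, which is the usual pattern in curvature-variation computations such as the classical expansion of the volume of small geodesic balls; tracking which third-order terms contribute is where I expect most of the work to lie.
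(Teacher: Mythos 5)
First, a point of reference: the paper does not prove this proposition at all --- it is imported verbatim from \cite{T}, so there is no in-paper argument to compare against. The genuine proof in \cite{T} does, like yours, rest on Taylor-expanding sectional curvatures of planes at points near the soul and converting nonnegativity into discriminant-type inequalities, so your overall instinct about where the inequality comes from is sound. But your proposal as written does not constitute a proof, for reasons that go beyond unfinished bookkeeping.

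The central logical step is invalid. You expand $K(s,u,t)=A_0(t)+A_1(t)u+A_2(t)u^2+O(u^3)\geq 0$ and then assert that $A_0$ supplies $at^2$ (plus the $\frac 23$-term), $A_1$ supplies $2bt$, $A_2$ supplies $|R(W,V,X)|^2$, and that ``$K\geq 0$ then forces $\varphi(t)\geq 0$.'' Nonnegativity of a function of $u$ constrains its Taylor coefficients order by order ($A_0\geq 0$; if $A_0=0$ then $A_1=0$ and $A_2\geq 0$), not their sum across different orders; and since your $u^0$ coefficient is generically strictly positive (it contains $R(X,Y,Y,X)$), nonnegativity of $K$ places no constraint at all on $A_1$ and $A_2$. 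The discriminant argument therefore never starts. (Separately, the assignment of terms is off: the $u^0$ coefficient of your family is the curvature at the soul of a plane containing only $X$, $Y$, $W$, which cannot produce $(D_XD_XR)(W,V,V,W)$ --- that term requires two derivatives in $s$ along the soul, which you never take; likewise $(D_XR)(X,Y,W,V)$ is an $s$-derivative, not a $u$-derivative.) Two further errors compound this. The Sharafutdinov fibers are \emph{not} totally geodesic --- they are totally geodesic only at points of the soul, as the paper itself notes --- so the $T$-tensor does not vanish; worse, the covariant derivative of $T$ is precisely the source of the $\frac 23 (D_XD_XR)(W,V,V,W)$ term, so discarding $T$ deletes the very quantity you need to produce. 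Finally, the entire content of the proposition is the exact constant $\frac 23$ and the exact combination $|R(W,V,X)|^2+\frac 23(D_XD_XR)(W,V,V,W)$; you explicitly defer this to future ``algebraic bookkeeping,'' so even if the framework were repaired, what you have is a plan rather than a proof. To salvage the approach you would need, for each fixed $Y$, a single nonnegative geometric quantity equal (to leading order) to a positive multiple of $\varphi(t)$, which is exactly what forces the much more delicate choice of comparison planes made in \cite{T}.
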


 Here, we are considering $R$ sometimes as a function from  $(T_pM)^3\ra T_pM$ and sometimes from $(T_pM)^4\ra\R$, in the obvious way.  This inequality was originally expressed in~\cite{T} in a manner which more explicitly distinguished the three different types of curvature that it relates:
$$\lb (D_XR^\nabla)(X,Y)W,V\rb^2 \leq\left(|R^\nabla(W,V)X|^2 + \frac 23 (D_XD_X k^f)(W,V)\right)\cdot k_\Soul(X,Y).$$
Here, $k_\Soul$  and $k^f$ denote respectively the unnormalized intrinsic sectional curvature of $\Soul$ and of the Sharafutinov fiber, $\exp(\nu_p(\Soul))$.  Notice that the intrinsic equals the extrinsic curvature because $\Soul$ is totally geodesic, and because each Sharafutinov fiber is totally geodesic a point of the soul.  To interpret the $k_f$ term, just extend $W,V$ to parallel fields $W(t),V(t)$ along the geodesic in $\Soul$ in the direction of $X$, and notice that $(D_XD_X k^f)(W,V)$ equals the second derivative at $t=0$ of the vertical curvature function $t\mapsto k^f(W(t),V(t))$.

Further, $R^\nabla:T_p\Soul\times T_p\Soul\times\nu_p(\Soul)\ra\nu_p(\Soul)$ denotes the ``normal curvature tensor'' which means the curvature tensor of the induced connection, $\nabla$, in $\nu(\Soul)$, so that $R^\nabla(W,V)X\in T_p\Soul$ can be defined as the unique vector such that $\lb R^\nabla(W,V)X,Y\rb = \lb R^\nabla(X,Y)W,V\rb$ for all $Y\in T_p\Soul$.  The fact that $R^\nabla$ is just a restriction of $R$ follows from the fact that $\Soul$ is totally geodesic.
\section{Proof of Theorem~\ref{A}}
We will require the following fact about smooth functions:
\begin{lem}\label{ilya}
Suppose $f,g:\R\ra\R$ are smooth functions.  Assume that $f(0)=0$ and that $g(t)$ has a global maximum at $t=0$.    Assume for all $t\in\R$ that:
$$f'(t)^2 \leq f(t)^2 + g''(t).$$
Then $f$ and $g$ are both constant functions.
\end{lem}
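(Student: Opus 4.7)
The plan is to derive a weighted integral inequality from $g''(t)\geq f'(t)^2-f(t)^2$, combine it with a Cauchy-Schwarz estimate exploiting $f(0)=0$, and then bootstrap a vanishing conclusion from a short interval to all of $\R$ using the convexity that is forced on $g$ wherever $f$ vanishes.

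Since $g$ attains its global maximum at $0$, one has $g'(0)=0$ and $g(S)\leq g(0)$ for every $S$. Integrating the hypothesis twice and applying Fubini yields, for every $S>0$,
\[
0\;\geq\;g(S)-g(0)\;\geq\;\int_0^S (S-t)\bigl(f'(t)^2-f(t)^2\bigr)\,dt,
\]
so $\int_0^S (S-t)f'(t)^2\,dt\leq\int_0^S (S-t)f(t)^2\,dt$. On the other hand, using $f(0)=0$ and Cauchy-Schwarz in the form $f(t)^2\leq t\int_0^t f'(s)^2\,ds$, then swapping the order of integration and computing the inner integral, gives
\[
\int_0^S (S-t)f(t)^2\,dt\;\leq\;\int_0^S f'(s)^2\,\frac{(S-s)^2(S+2s)}{6}\,ds\;\leq\;\frac{3S^2}{16}\int_0^S (S-s)f'(s)^2\,ds,
\]
where the second inequality uses $\max_{s\in[0,S]}(S-s)(S+2s)=9S^2/8$, attained at $s=S/4$. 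Combining these bounds yields $(1-3S^2/16)\int_0^S (S-t)f'(t)^2\,dt\leq 0$, so for any fixed $\delta\in(0,4/\sqrt 3)$, continuity of $f'$ forces $f'\equiv 0$ on $[0,\delta]$, and hence $f\equiv 0$ there.

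Once $f\equiv 0$ on $[0,\delta]$, the hypothesis gives $g''\geq 0$ on $[0,\delta]$, i.e., $g$ is convex there. Together with $g'(0)=0$ and $g(t)\leq g(0)$, convexity forces $g$ to be constant on $[0,\delta]$. In particular $g(\delta)=g(0)$ (so $g$ still attains its global maximum at $t=\delta$), $g'(\delta)=0$, and $f(\delta)=0$, so the hypotheses of the lemma hold verbatim for the translates $t\mapsto f(t+\delta)$ and $t\mapsto g(t+\delta)$. Rerunning the same argument extends the conclusion to $[\delta,2\delta]$, and induction gives $f\equiv 0$ and $g\equiv g(0)$ on all of $[0,\infty)$. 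Applying the same reasoning to $t\mapsto f(-t)$ and $t\mapsto g(-t)$ (which also satisfy the hypotheses) handles $(-\infty,0]$. The one mildly delicate step is the weighted Poincar\'e-type estimate that produces the initial interval of vanishing; after that, the convexity-plus-global-maximum bootstrap runs for free.
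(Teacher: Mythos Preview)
Your proof is correct and close in spirit to the paper's: both integrate the hypothesis, apply Cauchy--Schwarz to $f(t)=\int_0^t f'$, exploit the global maximum of $g$ to force $f\equiv 0$ on an initial interval, and then propagate by translation. The organization differs, however. The paper integrates once to obtain $g'(t)\geq h'(t)-h(t)$ with $h(t)=\int_0^t f(s)^2\,ds$, and then integrates again to get $g(t)-g(0)\geq (1-t)h(t)\geq 0$ on $(0,1)$, which together with $g(t)\leq g(0)$ kills $h$ and hence $f$ there. You instead integrate twice at the outset and package the estimate as a weighted Poincar\'e-type inequality $\bigl(1-\tfrac{3S^2}{16}\bigr)\int_0^S(S-t)f'(t)^2\,dt\leq 0$, giving vanishing on $[0,S]$ for any $S<4/\sqrt3$. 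Your route yields a longer initial interval and spells out the bootstrap that the paper dismisses with ``clearly also for all $t\in\R$''; the paper's route is a touch slicker in that it avoids the auxiliary optimization of $(S-s)(S+2s)$.
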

The following proof is due to Ilya Bogdanov, communicated via \emph{mathoverflow.net}.  Notice that the Lemma is very simple to prove for analytic functions.
\begin{proof}
Assume without loss of generality that $g(0)=0$.  Using Cauchy-Schwarz, we have that for all $t\in(0,1)$:
$$\int_0^t\left(f(s)^2 + g''(s)\right)\,\text{ds} \geq \int_0^t f'(s)^2\,\text{ds}
\geq\frac{\left(\int_0^t f'(s)\,\text{ds}\right)^2}{\int_0^t 1^2\,\text{ds}} =
\frac{f(t)^2}{t} \geq f(t)^2.$$
Therefore,
$$g'(t)\geq f(t)^2 - \int_0^t f(s)^2\,\text{ds}.$$
Define $h(t) = \int_0^t f(s)^2\,\text{ds}$, so the previous equation becomes:
$$g'(t)\geq h'(t)-h(t).$$
Since $h$ is monotonic, integrating the previous equation equation gives:
$$g(t)\geq h(t) - \int_0^t h(s)\,\text{ds}\geq h(t)-th(t)\geq 0.$$
Since $g(0)=0$ is a global maximum, this implies that $g(t)=h(t)=f(t)=0$ for all $t\in(0,1)$, and thus clearly also for all $t\in\R$.
\end{proof}

We are now prepared to prove our first theorem.
\begin{proof}[Proof of Theorem~\ref{A}]
Chose $p\in\Soul$ and orthogonal unit-length vectors $W,V$ in $\nu_p(\Soul)$ such that $k^f(W,V)$ is maximal (among all such $p,V,W$), which implies that $(D_XD_X k^f)(W,V)=0$ for all $X\in T_p \Soul$.  Since $X\mapsto R^\nabla(W,V)X$ is a skew-symmetric endomorphism of the odd-dimensional vector space $T_p \Soul$, there exists a unit-length vector $X\in T_p \Soul$ such that $R(W,V)X=0$.  For any $Y\in T_p \Soul$, the right side of the soul inequality vanishes for the vectors $\{X,Y,W,V\}$, and therefore the left side also vanishes.  

Let $\gamma(t)$ denote the geodesic in $\Soul$ with $\gamma(0)=p$ and $\gamma'(0)=X$.  Let $X(t) = \gamma'(t)$, which is the parallel transport of $X$ along $\gamma(t)$.  Let $W(t),V(t)$ denote the parallel transports of $W,V$ along $\gamma(t)$.  Define:
$$g(t) = \frac 23 \cdot k^f(V(t),W(t))\,\,\,\text{ and }\,\,\, f(t) = |R^\nabla(W(t),V(t))X(t)|.$$
Let $C$ denote the maximum sectional curvature of $\Soul$.  For any unit-length $Y(t)\in T_{\gamma(t)}\Soul$, the soul inequality gives:
$$\Big\langle \frac{D}{dt}(R^\nabla(W(t),V(t))X(t)),Y(t)\Big\rb^2 \leq\left(f(t)^2 + g''(t)\right)\cdot C.$$
In particular, choosing $Y$ parallel to $\frac{D}{dt}(R^\nabla(W(t),V(t))X(t))$ gives:
$$(f'(t))^2 \leq ( f(t)^2 + g''(t) )\cdot C.$$
Lemma~\ref{ilya} now implies that $f$ is identically zero and that $g$ is constant.
\end{proof}

\section{Proof of Theorem~\ref{B}}
In this section, we prove our second theorem.
\begin{proof}[Proof of Theorem~\ref{B}]
Let $\gamma(t),X(t),V(t)$ and $W(t)$ be as in the proof of Theorem~\ref{A}.  Choose a fixed $r>0$.
Let $\exp^\perp:\nu(\Sigma)\ra M$ denote the normal exponential map.  Let $A$ and $T$ denote the fundamental tensors of the Sharafutdinov map $\pi:M\ra\Soul$.

Let $\og(t) = \exp^\perp(r\cdot W(t))$, which is one of the $\pi$-horizontal geodesics out of which the cylinder $C_r$ is ruled.  In fact, since $\{W,V\}$ can be replaced with any orthonormal basis of their span, we can consider $\og(t)$ as an \emph{arbitrary} one of the geodesics which rule $C_r$.

Let $\ox(t) = \og'(t)$ and let $\ov(t) = (\exp^\perp_*)(V(t))$, where $(\exp^\perp_*)$ denotes the derivative of $\exp^\perp$ at the relevant point, which in this case is $r\cdot W(t)$.  Notice that $\ov(t)$ is a $\pi$-vertical vector field along $\og(t)$.  In fact, $\ov(t)$ is a vertical Jacobi field along $\og(t)$ because it is the variational field of the family of horizontal geodesics by which the cylinder $C_r$ is ruled.  The covariant derivative of $\ov(t)$ along $\og(t)$ has horizontal and vertical components determined respectively by the $A$ and $T$-tensors of $\pi$:
\begin{equation}\label{willow}\ov'(t) = A_{\ox(t)}\ov(t) + T_{\ov(t)}\ox(t).\end{equation}
To prove that $C_r$ is flat (in the sense of Question~\ref{Q}) it will suffice to show that $\ov'(t)=0$ for all $t$.  In fact, it will suffice to prove this just for $t=0$.

We will sometimes write $X,V,W,\ox$ and $\ov$ for the values of these fields at $t=0$.  By the Jacobi Equation, the sectional curvature of the vertizontal plane spanned by $\ox$ and $\ov$ equals:
\begin{equation}\label{kv} k(\ox,\ov)  = - \lb \ov'',\ov\rb. \end{equation}

We first claim that $A_{\ox}\ov =0$.  Recall that for any horizontal vectors $\overline{Z}_1,\overline{Z}_2$ at $\og(0)$, Proposition 1.7 of~\cite{SW2} implies that the $A$-tensor can be described as:
$$A_{\overline{Z}_1}\overline{Z}_2 =  \frac 12 (\exp^\perp_*)(R^\nabla(\pi_*\overline{Z}_1,\pi_*\overline{Z}_2)W).$$
For any horizontal vector $\overline{Y}$ at $\overline{\gamma}(0)$, let $Y=\pi_*\overline{Y}$ and notice that:
$$\lb A_{\ox}\ov,\overline{Y}\rb = -\lb A_{\ox}\overline{Y},\ov\rb
   = -\frac 12 \lb (\exp^\perp_*)(R^\nabla(X,Y)W),(\exp^\perp_*)V \rb.$$
This equals zero because
$$\lb R^\nabla(X,Y)W,V\rb = \lb R^\nabla(W,V)X,Y\rb = 0,$$
and because $\exp^\perp_*$ preserves orthogonality due to our assumption that the fibers are rotationally symmetric (and thus that the distance spheres in these fibers are round).

Since the $A$-tensor term  vanishes, Equation~\ref{willow} becomes $\ov'(t) = T_{\ov(t)}\ox(t)$, and it remains to prove that this $T$-tensor term vanishes.  Let $\ou$ be an arbitrary vertical vector at $\og(0)$ which is perpendicular to $\ov$ and to the radial direction.  Let $\ou(t)$ denote the extension of $\ou(0)=\ou$ to the ``hononomy Jacobi field'' along $\og(t)$; that is, the variational vector field of the family of lifts of $\gamma(t)$ to horizontal geodesics beginning at points along a curve in the Sharafutdinov fiber tangent to $\ou$.  Notice that for each $t_0$, $\ov(t_0)$ and $\ou(t_0)$ are the images of $\ov$ and $\ou$ under the derivative at $\og(0)$ of the ``holonomy diffeomorphism" $h_\gamma:\pi^{-1}(\gamma(0))\ra\pi^{-1}(\gamma(t_0))$.  Since the fibers are rotationally symmetric, $h_\gamma$ (restricted to the spheres of radius $r$) is simply a rescaling map from a round spheres to a round sphere, and thus preserve orthogonality.  Therefore,
$$\lb\ou(t_0),\ov(t_0)\rb=\lb d(h_\gamma)_{\og(0)}(\ou),d(h_\gamma)_{\og(0)}(\ov)\rb = \lb \ou,\ov\rb = 0.$$
It follows that
$$\lb \ov'(t),\ou(t)\rb = \lb T_{\ov(t)}\ox(t),\ou(t)\rb = (1/2)\frac{d}{dt}\lb \ov(t),\ou(t)\rb = 0.$$
This means that $\ov'(t)$ is parallel to $\ov(t)$, so we can write $\ov(t) = s(t)P(t)$ for some positive-valued function $s(t)$ and some parallel vector field $P(t)$.  Equation~\ref{kv} implies that $s''(t)\leq 0$ for all $t$.  Since $s(t)$ is nowhere zero, we conclude that $s(t)$ is constant.  Thus, $\ov(t)$ is a parallel Jacobi field.

It remains to explain why the Sharafutdinov fibers along $\gamma$ are mutually isometric.  Since $|\ov(t)|$ is constant, the $\{t=\text{constant}\}$ circles in the cylinder $C_r$ must all have the same length.  But these are great circle in the round spheres $\exp^\perp(\nu_{\gamma(t)}(\Soul))\cap B_r(\Soul)$, so these round spheres must all have the same diameters.  Since this is true for each $r$, the Sharafutdinov fibers along $\gamma$ have isometric distance spheres at all distances, so the fibers are mutually isometric.
\end{proof}


\bibliographystyle{amsplain}

\begin{thebibliography}{9}
\bibitem{CG}
   J. Cheeger and D. Gromoll, \emph{On the structure of complete manifolds of nonnegative curvature},
   Ann. of Math. \textbf{96} (1972), 413--443.
\bibitem{GW}
  L. Guijarro and G. Walschap, \emph{The metric projection onto the soul},
  Tans. Amer. Math. Soc. \textbf{352} (2000), no. 1, 55-69.
\bibitem{SW2}
  M. Strake and G. Walschap, \emph{$\Sigma$-flat manifolds and Riemannian submersions}, Manuscripta Math. \textbf{61} 315-325.
\bibitem{T}
   K. Tapp, \emph{Conditions of nonnegative curvature on vector bundles and sphere bundles},
   Duke Math. Journal. \textbf{116}, no. 1 (2003), 77-101.
\bibitem{T3} K. Tapp, \emph{Quasi-positive curvature on homogeneous bundles}, J. Diff. Geom, \textbf{65} (2003), pp. 273--287.
\bibitem{W} B. Wilking, \emph{Manifolds with positive sectional
curvature almost everywhere}, Invent.\ Math. \textbf{148} (2002), 117--141.
\end{thebibliography}

\end{document}